\begin{document}
	
	\title[Algebraic properties of face algebras]{Algebraic properties of face algebras}
	
	\author{Fabio Calder\'{o}n and Chelsea Walton}
	
	\address{Calder\'{o}n: Department of Mathematics,  Universidad Nacional de Colombia, Bogot\'{a}, Colombia}
	\email{facalderonm@unal.edu.co}
	
	\address{Walton: Department of Mathematics, Rice University, Houston, TX 77005, USA}
	\email{notlaw@rice.edu}
	
	\begin{abstract} 
	   Prompted an inquiry of Manin on whether a coacting Hopf-type structure $H$ and an algebra $A$ that is coacted upon share algebraic properties, we study the particular case of $A$ being a path algebra $\kk Q$ of a finite quiver $Q$ and $H$ being Hayashi's face algebra $\hay$ attached to $Q$. This is motivated by the work of Huang, Wicks, Won, and the second author, where it was established that the weak bialgebra coacting universally on $\kk Q$ (either from the left, right, or both sides compatibly) is $\hay$. For our study, we define the Kronecker square $\widehat{Q}$ of $Q$,  and show that $\hay \cong \kk \widehat{Q}$ as unital graded algebras. Then we obtain ring-theoretic and homological properties of $\hay$ in terms of graph-theoretic properties of $Q$ by way of $\widehat{Q}$.
	\end{abstract}
	
	\subjclass[2020]{16T20, 05C25}
	\keywords{face algebra, path algebra, quiver, Kronecker square}

	\maketitle




	\section{Introduction} \label{sec:intro}
	
	This work contributes to the study of Algebraic Quantum Symmetry. In particular, we examine when  an algebra $A$ and a Hopf-type algebra $H$ that coacts on $A$ universally\footnote{By $H$ coacting on $A$ universally, we mean that $A$ is an $H$-comodule algebra, so that if $A$ is also a $H'$-comodule algebra, then there is a unique Hopf-type structure map $H \to H'$ compatible with the coactions.} share algebraic properties. This line of research was prompted by Manin's inquiry in the work of Artin-Schelter-Tate \cite{AST} on whether the  bialgebras that coact universally on a well-behaved class of connected graded algebras, Artin-Schelter regular algebras, also enjoy nice ring-theoretic and homological properties. In that work, the question was addressed for key examples of connected graded comodule algebras, skew polynomial rings \cite{AST}, and has been addressed later for other Artin-Schelter regular algebras, especially for the Noetherian, domain, prime, growth conditions and homological dimensions; see, e.g., \cite[Sections~I.2 and II.9]{BG} and \cite{WaltonWang}. But Manin's question is unresolved, in general. Our goal is to address this inquiry for a class of graded algebras that are generally non-connected: path algebras of finite quivers. 
	
	\smallskip
	
	Recall that a \emph{quiver} is simply a directed graph, which is a quadruple $Q=(Q_0,Q_1,s,t)$, where $Q_0$ (resp., $Q_1$) is a  collection of vertices (resp., arrows), and $s,t: Q_1 \rightarrow Q_0$ denote the source and target maps, respectively. We say that $Q$ is \emph{finite} if both $|Q_0|$ and $|Q_1|$ are finite sets, and this will be assumed throughout (see Hypothesis~\ref{hyp:finite}). We  read paths of $Q$ from left-to-right, and all cycles are assumed to be oriented here. Fix an arbitrary field $\kk$. For any quiver $Q$, its associative, unital \emph{path algebra} $\kk Q$ over $\kk$ is the $\kk$-algebra having $\kk$-basis given by the paths of $Q$, with ring structure determined by path concatenation when possible: $a \ast b = \delta_{t(a),s(b)}ab$, for all paths $a, b$ of $Q$. The unit is  $1_{\kk Q} = \sum_{i\in Q_0} e_i$, where each $e_i$ is the trivial path at vertex $i$. The path algebra $\kk Q$ is $\mathbb{N}$-graded by path length, where $(\kk Q)_k= \kk(Q_k)$, for $Q_k$ consisting of paths of  length $k\in \mathbb{N}$.
	
	\smallskip

	Recently, the second author with Huang, Wicks, and Won established a framework for studying universal coactions on non-connected graded algebras via  coactions of {\it weak bialgebras}. Their main result is that the weak bialgebras that coact universally on the path algebra $\kk Q$ (either from the left, from the right, or from both directions compatibly) are each isomorphic to Hayashi's face algebra $\hay$ attached to $Q$ \cite[Theorem~4.17]{HWWW20}. We provide the algebra presentation of $\hay$  below.
	
	\begin{definition}[$\hay$]  \cite[Example~1.1]{Hayashi96}
		\label{ex:hay} 
		For a finite quiver $Q$, the algebra presentation of Hayashi's face algebra $\hay$ attached to $Q$  over a field $\kk$ is given as follows. As a $\kk$-vector space, $\hay$ has a $\kk$-basis of elements $\{x_{a,b}\}$ for $a,b \in Q_k$, for each $k \geq 0$. The ring structure of  $\hay$ is determined by the following relations: 
		\[
		\begin{array}{ll}
			\smallskip
			x_{i,j} x_{i',j'} \; = \; \delta_{i,i'} \delta_{j,j'}  x_{i,j}, &\quad \forall i,j,i',j' \in Q_0,\\
			\smallskip
			x_{s(p),s(q)}x_{p,q} \;= \;x_{p,q} \;= \;x_{p,q} x_{t(p),t(q)}, &\quad \forall p,q \in Q_1,\\
			x_{p,q} x_{p',q'} \;=\; \delta_{t(p), s(p')} \delta_{t(q), s(q')}  x_{pp',qq'}, &\quad
			\forall p,p',q,q' \in Q_1,
		\end{array}
		\]
		with unit given by $1_{\hay} = \textstyle \sum_{i, j \in Q_0} x_{i,j}$. Hayashi's face algebra has a $\mathbb{N}$-grading given by $(\hay)_k= \bigoplus_{a,b\in Q_k} \kk x_{a,b}$, for all $k\in \mathbb{N}$.
	\end{definition}	
	
	It was inquired in \cite[Question 6.5]{HWWW20} whether $\hay$ and $\kk Q$ share nice algebraic properties. We address this question in the result below.
	
	\begin{theorem}[Theorem~\ref{th:mainth}]
		Let $Q$ be a finite quiver,  let $C=(c_{i,j})_{i,j\in Q_0}$  be the adjacency matrix of  $Q$, and let $C^{k}:=(c^{(k)}_{i,j})_{i,j\in Q_0}$ be the adjacency matrix of $Q_k$. Then,
		\begin{enumerate}
		\item[\textnormal{(a)}] The path algebra $\kk Q$ and the face algebra $\hay$ share the following algebraic properties and dimensions: 
		\begin{itemize}
		    \item finite dimensionality; 
		    \item finite Gelfand-Kirillov (GK-)dimensionality;
			 \item (left/right) Noetherianity; 
			\item semiprime; 
			\item global dimension; and
		 \item Koszulity.
		 \end{itemize}
		Moreover, if $\kk Q$ is prime and $Q$ has at least one cycle, then $\hay$ is prime.
		
		\smallskip
		
		\item[\textnormal{(b)}] If $\dim_{\kk} (\kk Q)$ is finite,
		 then
				$\textstyle \dim_{\kk} (\hay) = \sum_{i,j\in Q_0, \; k\geq 0}\; (c_{i,j}^{(k)})^2.$
		
			\smallskip
			
		\item[\textnormal{(c)}] If $\operatorname{GKdim}(\kk Q)$ is finite, then $\operatorname{GKdim}(\hay)=2 \operatorname{GKdim}(\kk Q) - 1 $.
		
		\smallskip
		
		\item[\textnormal{(d)}] The Hilbert series of $\hay$ is given by
			\begin{equation*}
				H_{\hay}(t)=(I \otimes I)+(C\otimes C)t+(C^2\otimes C^2)t^2+\cdots,
			\end{equation*}
		where $I$ is the $|Q_0| \times |Q_0|$ identity matrix, and $\otimes$ is the tensor product of matrices.
		\end{enumerate}
	\end{theorem}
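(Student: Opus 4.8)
The engine of the proof is the isomorphism $\hay \cong \kk\widehat{Q}$ of $\mathbb{N}$-graded unital algebras supplied by the Kronecker square, which I would take as already in hand. Through it, every assertion about $\hay$ translates into one about the path algebra of the finite quiver $\widehat{Q}$, whose vertex set is $Q_0 \times Q_0$ and whose arrows are the pairs $(p,q)\in Q_1\times Q_1$, with $(p,q)$ running from $(s(p),s(q))$ to $(t(p),t(q))$. The one structural remark I would record up front is that the adjacency matrix of $\widehat{Q}$ is the Kronecker product $C\otimes C$ — the number of arrows $(i,j)\to(i',j')$ being $c_{i,i'}\,c_{j,j'}$ — so that, by the mixed-product rule $(C\otimes C)^k = C^k\otimes C^k$, the adjacency matrix of $\widehat{Q}_k$ is $C^k\otimes C^k$, and the number of length-$k$ paths in $\widehat{Q}$ from $(i,j)$ to $(i',j')$ is $c^{(k)}_{i,i'}c^{(k)}_{j,j'}$.

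Parts (d), (b), (c) fall out of this. For (d): for any finite quiver $R$, the matrix-valued Hilbert series of $\kk R$ relative to the vertex idempotents has $(u,v)$-entry $\sum_{k\ge0}\dim\!\bigl(e_u(\kk R)_k e_v\bigr)t^k = \sum_{k\ge0}(A_R^k)_{u,v}\,t^k$; taking $R=\widehat{Q}$ and substituting $A_{\widehat{Q}}^k = C^k\otimes C^k$ gives the claimed series. Part (b) is the evaluation $t\mapsto 1$ followed by a sum over all matrix entries, yielding $\dim_{\kk}\hay=\sum_{k\ge0}\bigl(\sum_{i,j\in Q_0}c^{(k)}_{i,j}\bigr)^{2}$, which is finite precisely when $\kk Q$ is, i.e.\ when $Q$ is acyclic. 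For (c): it is known that, for a finite quiver $R$, $\operatorname{GKdim}(\kk R)$ is finite exactly when $R$ is ``tame-cycled'' (no vertex lies on two distinct simple cycles and no two distinct simple cycles are mutually reachable), and that then $\operatorname{GKdim}(\kk R)$ is the largest $s$ for which $R$ admits simple cycles $\Gamma_1,\dots,\Gamma_s$ and paths from $\Gamma_i$ to $\Gamma_{i+1}$ for each $i$. First, $\widehat{Q}$ is tame-cycled iff $Q$ is, since a simple cycle of $\widehat{Q}$ projects coordinatewise to closed walks of $Q$ and conversely. For the value: from a longest such chain $\Gamma_1,\dots,\Gamma_s$ in $Q$ I would build the ``staircase'' of simple cycles of $\widehat{Q}$ given by $(\Gamma_1,\Gamma_1), (\Gamma_2,\Gamma_1), (\Gamma_2,\Gamma_2), (\Gamma_3,\Gamma_2),\dots,(\Gamma_s,\Gamma_s)$, joined by paths in $\widehat{Q}$ that advance one coordinate along its $Q$-chain while the other idles around a fixed cycle (adjusting arc lengths so the two coordinates match in length); this is a chain of $2s-1$ simple cycles, so $\operatorname{GKdim}(\kk\widehat{Q})\ge 2s-1$. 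Conversely, a chain $D_1,\dots,D_N$ of simple cycles of $\widehat{Q}$ projects to two chains of simple cycles in $Q$, each of length $\le s$, and a passage $D_k$ to $D_{k+1}$ must alter at least one of the two projected cycles, because two distinct simple cycles of $\widehat{Q}$ lying over the same pair of $Q$-cycles are not mutually reachable (a length-matching argument, using tameness of $Q$, shows any path in $Q$ between two vertices of a single simple cycle stays on that cycle). Hence $N-1\le 2(s-1)$, and $\operatorname{GKdim}(\hay)=2\operatorname{GKdim}(\kk Q)-1$ when $\operatorname{GKdim}(\kk Q)\ge 1$; the acyclic case is subsumed by finite dimensionality.

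For (a), the isomorphism reduces each item to ``$\kk Q$ has the property iff $\kk\widehat{Q}$ does''. Finite dimensionality is acyclicity, and $\widehat{Q}$ is acyclic iff $Q$ is (a cycle $\gamma$ of $Q$ produces the cycle $(\gamma,\gamma)$; the converse is projection); finite GK-dimensionality is part of (c). Global dimension: path algebras are hereditary, so $\operatorname{gldim}\in\{0,1\}$, with value $0$ iff there are no arrows, which holds for $\widehat{Q}$ iff it holds for $Q$. Koszulity holds for every path algebra of a finite quiver, so both $\kk Q$ and $\hay$ are Koszul. Semiprimeness of $\kk R$ is equivalent to every arrow of $R$ lying on an oriented cycle, and an arrow $(p,q)$ of $\widehat{Q}$ lies on a cycle iff both $p$ and $q$ do in $Q$ — from cycles of lengths $a,b$ through $p,q$, go around them $b$ and $a$ times to return with matching length — so semiprimeness transfers. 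Left (resp.\ right) Noetherianity of $\kk R$ is governed by a graph condition on the arrows emanating from, or entering, the vertices lying on cycles; since the arrows of $\widehat{Q}$ are pairs of arrows of $Q$ and cyclic vertices of $\widehat{Q}$ project to cyclic vertices of $Q$, the condition holds for $Q$ iff for $\widehat{Q}$, in each handedness. Finally, $\kk R$ is prime iff $R$ is strongly connected (a single vertex counting as such). One implication transfers freely: if $\widehat{Q}$ is strongly connected then so is $Q$ (project a path $(u,u)\to(v,u)$). The reverse fails in general — for $Q$ with vertices $1,2$ and arrows $1\to2$, $2\to1$, the square $\widehat{Q}$ splits into two components — and this is precisely where the hypothesis ``$Q$ has a cycle'' enters: its presence makes the return-length arithmetic flexible enough that strong connectivity of $Q$ forces strong connectivity of $\widehat{Q}$, whence $\hay\cong\kk\widehat{Q}$ is prime.

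The genuinely non-formal points, and where I expect the work to concentrate, are the upper bound in (c) — excluding a chain of cycles in $\widehat{Q}$ that advances both coordinates simultaneously, which requires understanding the component structure of $\widehat{Q}$ under the tame-cycle hypothesis — and the primeness statement in (a), the one property for which strong connectivity does not pass to $\widehat{Q}$ automatically. The hardest part will be the synchronization argument deriving strong connectivity of $\widehat{Q}$ from strong connectivity of $Q$ together with the cycle hypothesis.
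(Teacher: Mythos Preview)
Your strategy is exactly the paper's: realise $\hay$ as $\kk\widehat{Q}$, then transfer each property by comparing the relevant graph condition on $Q$ with that on $\widehat{Q}$. Parts (d) and (c), and the items for finite-dimensionality, global dimension, and Koszulity, match the paper's arguments closely; your semiprimeness criterion (every arrow lies on a cycle) is equivalent to the paper's path-reversibility, and your Noetherianity sketch lines up with the paper's source/sink-cycle criterion.

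Two points, however, need attention.

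\medskip

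\textbf{Part (b).} Your computation yields
\[
\dim_\kk\hay \;=\; \sum_{k\ge 0}|Q_k|^2 \;=\; \sum_{k\ge 0}\Bigl(\sum_{i,j\in Q_0}c^{(k)}_{i,j}\Bigr)^{2},
\]
which is the sum of \emph{all} entries of $C^k\otimes C^k$. The formula in the statement is $\sum_{i,j\in Q_0,\;k\ge 0}(c^{(k)}_{i,j})^2$, the sum over the ``diagonal'' pairs $(i,i),(j,j)$ only. These disagree already for $Q=\bullet\to\bullet$: your formula gives $5$, the stated one gives $3$, and the algebra in the paper's own table has dimension $5$. So your argument is internally correct but does not prove the displayed formula; it proves a different (and, on that example, the correct) one.

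\medskip

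\textbf{Primeness.} Your paragraph here is self-contradictory. The quiver $Q$ with vertices $1,2$ and arrows $1\to 2$, $2\to 1$ that you offer as the obstruction \emph{does} have a cycle, so it already satisfies the hypothesis ``$\kk Q$ prime and $Q$ has at least one cycle''; yet, as you correctly compute, $\widehat{Q}$ splits into two components and $\kk\widehat{Q}$ is a nontrivial product, hence not prime. Thus no ``synchronization argument'' can deliver the implication as stated: in this $Q$ every path $1\to 2$ has odd length and every path $1\to 1$ has even length, so there is no pair of equal-length paths joining $(1,1)$ to $(2,1)$ in $\widehat{Q}$. The paper's route here is to pass through an auxiliary notion of \emph{pairwise} strong connectivity and an explicit length-equalising construction that routes both paths through a fixed cycle; you should study that construction and test it against your own $2$-cycle example to see exactly what hypothesis on $Q$ is really being used.
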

	
	\noindent Indeed, it is well known that $\textstyle \dim_{\kk} (\kk Q) = \sum_{i,j\in Q_0,  k\geq 0}\; c_{i,j}^{(k)}$ and $H_{\kk Q}(t)=(I - Ct)^{-1}$.
	
	The main idea of the proof is to consider the Kronecker square $\widehat{Q}$ of the quiver $Q$ [Definition~\ref{ex:Qhat}], and realize $\hay$ as the path algebra $\kk \widehat{Q}$ [Proposition~\ref{prop:main}] (see also \cite[Remark~3.3]{Pfeiffer}). Then, the theorem above follows from (i) prior results on relating algebraic properties of a path algebra with graph-theoretic properties of the underlying quiver [Proposition~\ref{prop:algprop}] and (ii) a careful analysis of graph-theoretic properties that are shared between a quiver and its Kronecker square [Proposition~\ref{prop:quivprop}]. Many examples of these quivers and their path algebras are provided in Table~\ref{tab:examples}.
	Finally, in Table~\ref{tab:dimADE} we compute $\dim_\kk \kk Q$ and $\dim_\kk \hay$, for some quivers $Q$ of Dynkin type~ADE. 
	
	\medskip
	
	\noindent {\bf Acknowledgements.} 
	We are grateful to Ashish Srivastava for pointing out an error in a previous version of this manuscript on the GK-dimension of $\hay$; it has now been corrected here.	We also thank Marcelo Aguiar, Pablo Ocal, Mohamed Omar, and the anonymous referee for insightful comments and suggestions.

	The first author was supported by the research fund of the Department of Mathematics, Universidad Nacional de Colombia - Sede Bogot\'a, Colombia, HERMES code 52464, and the Fulbright
		Visiting Student Researcher Program. The second author   was partially supported by the US National Science Foundation grants DMS-1903192 and 2100756.


	\section{Preliminaries} \label{sec:prelim}
	
	In this section, we provide an overview of some ring-theoretic and homological properties that will be used in this work; see Section~\ref{sec:properties}. We also recall preliminary graph-theoretic concepts of quivers and corresponding algebraic properties of path algebras; see Section~\ref{sec:quiver}.
	
	\subsection{Ring-theoretic and homological properties} \label{sec:properties}

Take $R$  a ring, $\kk$ an arbitrary field, and $A$ a $\kk$-algebra here.  We refer the reader to \cite[Section 1.1]{GW}, \cite[Sections 0.2, 7.1, 8.1]{MR}, \cite[Section 1]{Fro99}, and \cite[Section 2.1]{EE} for further details of the material here.
	
	\smallskip
	
	A right $R$-module $M_R$ is called \emph{Noetherian} if 
	every submodule of $M$ is finitely generated.
		In particular, a ring $R$ is said to be \emph{right Noetherian} if $R_R$ is Noetherian. Likewise, we can define {\it left Noetherian} rings, and $R$ is {\it Noetherian} if it is both left and right Noetherian.
		
		\smallskip
	
		The \emph{projective dimension} of a module $M_R$, written $\operatorname{pd}(M_R)$, is the shortest length $n$ of a projective resolution of $M$, or equal to 
	   $\infty$ if no such $n$ exists. The \emph{right global dimension} of $R$ is defined by
	$\operatorname{r.gldim}(R):= \sup\{ \operatorname{pd}(M) \mid M \text{ any right $R$-module}\}$.
		Likewise, $\operatorname{l.gldim}(R)$ is defined; when $\operatorname{r.gldim}(R)=\operatorname{l.gldim}(R)$, we simply write $\operatorname{gldim}(R)$. A ring $R$ is called \emph{hereditary} if $\operatorname{gldim}(R)\leq 1$.
	
	 \smallskip
	 
	 An $\N$-graded $\kk$-algebra $A$ is said to be \emph{Koszul} if it has a linear minimal graded free resolution, that is, there exists an exact sequence
	\begin{equation*}
		\cdots \rightarrow A(-i)^{b_i} \rightarrow \cdots \rightarrow A(-2)^{b_2} \rightarrow A(-1)^{b_1} \rightarrow A \rightarrow \kk \rightarrow 0,
	\end{equation*}
	where $A(-j)$ is the graded algebra $A$ with grading shifted up by $j$, that is $A(-j)_i = A_{i-j}$, and the exponents $b_i$ refer to the $b_i$-fold direct sum. 
	
		\smallskip
	
	    A ring $R$ is called \emph{prime} if for every pair of nonzero two-sided ideals $I,J$ of $R$ it follows $IJ\neq 0$, and is called \emph{semiprime} if $R$ has no nonzero nilpotent two-sided ideals. 
		
		\smallskip
	
		Let $A$ be a finitely generated $\kk$-algebra. The \emph{Gelfand-Kirillov dimension} of $A$ is defined by $\operatorname{GKdim}(A)= \sup_V\overline{\lim}_{n\rightarrow \infty} \log_n (\dim_{\kk} V^n )$, where the supremum is taken over all finite dimensional $\kk$-subspaces $V$ of $A$ and $V^n$ denotes the subspace spanned by all elements of the form $v_1\cdots v_n$, where $v_i\in V$, $1\leq i \leq n$. A well known result is that  $\operatorname{GKdim}(A) = 0$ if and only if $A$ is finite-dimensional as a $\kk$-vector space.
	
	\smallskip
	
	Finally, recall that a $\mathbb{N}$-graded bimodule $M=\bigoplus_{n\in \mathbb{N}} M_n$ over a $\mathbb{N}$-graded $\kk$-algebra $A=\bigoplus_{n\in \mathbb{N}} A_n$ is said to be
	\emph{locally finite} if each $M_n$ is finite-dimensional. Let $J$ be a finite set and consider the $\kk$-algebra $A:=\kk^{|J|}$. Note that a locally finite $\mathbb{N}$-graded $A$-bimodule $M$ can be seen as an $J\times J$-graded vector space $M=\bigoplus_{i,j\in J} M_{i,j}$. We define the \emph{(matrix) Hilbert series $h_M(t)$} of  $M$ to be a matrix-valued series with entries given by
		\begin{equation*}
			h_M(t)_{i,j}= \textstyle \sum_{k\geq 0} \dim_\kk ((M_k)_{i,j}) t^k .
		\end{equation*}

	
	\subsection{Properties of quivers and their path algebras} \label{sec:quiver}
	Here, we discuss graph-theoretic properties of quivers, and algebraic properties of their corresponding path algebras. To begin, consider the following assumption.
	
	\begin{hypothesis} \label{hyp:finite}
	We assume throughout this work that quivers are finite.
	\end{hypothesis}
	
	Next, we discuss some graph-theoretic concepts, starting with various types of cycles relevant to our work. Recall that in this paper ``cycle'' means ``oriented cycle''. A quiver is said to be {\it acyclic} if it contains no cycles.
	
	\begin{definition}\label{def:cycle}
		Let $Q=(Q_0,Q_1,s,t)$ be a quiver.
		\begin{enumerate}[label=(\roman*)]
			\item A cycle $p_1 p_2 \cdots p_k\in Q_k$ is called a \emph{simple} if $s(p_i)\neq t(p_j)$ for $2\leq i,j\leq k$.
			\smallskip			
			\item A cycle $c:=p_1 p_2 \cdots p_k $ of $Q$ is said to be a {\it source} (resp., {\it sink}) {\it cycle} if there exists an arrow leaving (resp., entering) $c$, i.e., there exists an arrow $p \in Q_1$, not in $c$, with $s(p) = s(p_i)$ (resp., $t(p) = t(p_i)$) for some $i=1,\dots,k$. 
			
			\smallskip
			
			\item A cycle of $Q$ is called {\it isolated} if it is neither a source nor sink cycle.
			
			\smallskip
			
			\item A cycle of $Q$ is called \emph{exclusive} (or \emph{cyclically simple}) if it is disjoint with every other cycle.

			\smallskip
			
			\item $Q$ is said to satisfy the \emph{exclusive condition} if every cycle of $Q$ is  exclusive.

			\smallskip
			
			\item For two cycles $c,d$ of $Q$, we write $c \Rightarrow d$ if there is a path that starts at a vertex in $c$ and ends at a vertex in $d$. A sequence of distinct cycles $c_1,\ldots,c_n$ of $Q$ is a \emph{chain of cycles} of length $n$ if $c_1 \Rightarrow c_2 \Rightarrow \cdots \Rightarrow c_n$. 

		\end{enumerate}
	\end{definition}
	
	In Figure~\ref{fig:isolated} below, we present examples of (non-)isolated and (non-)exclusive cycles. Notice that every isolated cycle is an exclusive cycle.
	
	\smallskip
	
	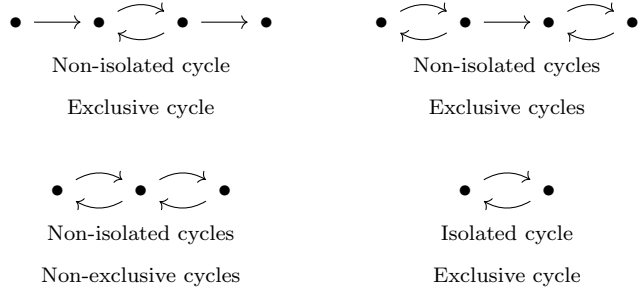
\begin{figure}[!ht]
		\begin{tabular}{c c cc c c}
			$\begin{tikzcd}[sep=scriptsize]
				\bullet \arrow[r] &	\bullet \arrow[r,bend left] & \bullet \arrow[r] \arrow[l,bend left] & \bullet
			\end{tikzcd}$ &&& $\begin{tikzcd}[sep=scriptsize]
				\bullet \arrow[r,bend left] & \bullet \arrow[l,bend left] \arrow[r] & \bullet \arrow[r,bend left] & \bullet \arrow[l,bend left]
			\end{tikzcd}$ \\
			\footnotesize{Non-isolated cycle}&&&\footnotesize{Non-isolated cycles}\\
			\footnotesize{Exclusive cycle} &&& \footnotesize{Exclusive cycles}\\&\\
			$\begin{tikzcd}[sep=scriptsize]
				\bullet \arrow[r,bend left] & \bullet \arrow[r,bend left] \arrow[l,bend left] & \bullet\arrow[l,bend left]
			\end{tikzcd}$ &&& 
			$\begin{tikzcd}[sep=scriptsize]
				\bullet \arrow[r,bend left] & \bullet \arrow[l,bend left]
			\end{tikzcd}$\\
			\footnotesize{Non-isolated cycles} &&& \footnotesize{Isolated cycle}\\
			\footnotesize{Non-exclusive cycles} &&& \footnotesize{Exclusive cycle}
		\end{tabular}
		\caption{Non-/isolated and non-/exclusive cycles}\label{fig:isolated}
	\end{figure}

	Next, we turn our attention to the connected condition of quivers.	
	
	\begin{definition}
		Let $Q=(Q_0,Q_1,s,t)$ be a quiver.
		\begin{enumerate}[label=(\roman*)]
			\item $Q$ is said to be \emph{connected} if for any given decomposition $Q_0=Q_0'\cup Q_0''$, with $Q_0'\cap Q_0''=\emptyset$ and both $Q_0',Q_0''$ non-empty, there exists at least one arrow $p\in Q_1$ such that either $s(p)\in Q_0'$, $t(p)\in Q_0''$ or  $s(p)\in Q_0''$, $t(p)\in Q_0'$.
			
			\smallskip
			
			\item $Q$ is said to be \emph{strongly connected} (or \emph{oriented connected}) if for every $i,j\in Q_0$ with $i\neq j$, there exists a path $p_1\cdots p_k$ such that $s(p_1)=i$ and $t(p_k)=j$.
			
			\smallskip
			\item $Q$ is said to be \emph{pairwise strongly connected} if for every $i,j,i',j'\in Q_0$ with $i\neq j$ and $i'\neq j'$, there exist paths  $p_1\cdots p_k$ and $q_1\cdots q_k$ of the same length such that $s(p_1)=i$, $t(p_k)=j$, $s(q_1)=i'$ and $t(q_k)=j'$.
			
			\smallskip
			
			\item $Q$ is said to be \emph{path reversible} if for every path $p_1p_2\cdots p_k\in Q_k$, there exists a path $q_1 q_2 \cdots q_l\in Q_l$ such that $s(p_1)=t(q_l)$ and $t(p_k)=s(q_1)$. Here, $l$ need not equal $k$.
		\end{enumerate}
	\end{definition}

	It is clear that pairwise strongly connected $\Rightarrow$ strongly connected $\Rightarrow$ connected. However, the converses do not hold as we see in Figure~\ref{fig:connected} below.
	
	\begin{figure}[!ht]
		\begin{tabular}{c cc c}
			$\begin{tikzcd}[sep=scriptsize]
				\bullet \arrow[r] & \bullet
			\end{tikzcd}$ &&& $\begin{tikzcd}[sep=scriptsize]
				& \bullet \arrow[rd] \\
				\bullet \arrow[ru] && \bullet \arrow[ll]
			\end{tikzcd}$\\			
			\footnotesize{Connected} &&&\footnotesize{Strongly connected}\\
			\footnotesize{Not strongly connected} &&& \footnotesize{Not pairwise strongly connected}
		\end{tabular}
		\caption{Various connected quivers}\label{fig:connected}
	\end{figure}
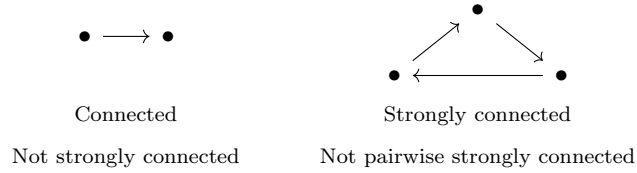

	Now we recall results on algebraic properties of path algebras which depend on graph-theoretic properties of the underlying quiver; many of these results are standard.
	
	\begin{notation}[$C$, $C^k$, $c_{i,j}^{(k)}$] \label{not:adj}
		For a quiver $Q$, denote by $C=(c_{i,j})_{i,j\in Q_0}$ the adjacency matrix of (arrows in) $Q$, and by $C^{k}:=(c^{(k)}_{i,j})_{i,j\in Q_0}$  the adjacency matrix of paths of length $k$ in $Q$ (which is equal to the $k$-th power of $C$).
	\end{notation}
	
	\begin{proposition} \label{prop:algprop}
		Let $Q=(Q_0,Q_1,s,t)$ be a quiver.
		\begin{enumerate}[label=\normalfont(\roman*)]
			\item[\textnormal{(i)}] 
			\begin{enumerate}
				\item[\textnormal{(a)}] \cite[Lemma~II.1.4]{ASS2006}
				$\kk Q$ is finite dimensional if and only if $Q$ is acyclic. 
				
				\smallskip
				
				\item[\textnormal{(b)}] 
				When $\kk Q$ is finite dimensional, $\dim_{\kk} \kk Q = \sum_{i,j\in Q_0,\; k\geq 0} c_{i,j}^{(k)}.$
				\smallskip
				
				\item[\textnormal{(c)}] In general, the Hilbert series of $\kk Q$ is given by
				\begin{equation*}
					H_{\kk Q}(t)=(I-Ct)^{-1}=I+Ct+C^{2}t^2+C^{3}t^3+\cdots,
				\end{equation*}
				where $I$ denotes the $|Q_0| \times |Q_0|$ identity matrix. 
			\end{enumerate}
			
			\smallskip
			
			\item[\textnormal{(ii)}]  \cite{Ufnar} \textnormal{(see also \cite[Theorem~3.12]{MorenoMolina})} $\kk Q$ has finite GK-dimension if and only if $Q$ satisfies the exclusive condition. In this case, $\operatorname{GKdim}(\kk Q)$ equals  the maximal length of chains of cycles in $Q$. 
			
			\smallskip
			
			\item[\textnormal{(iii)}] \textnormal{(see, e.g., \cite[Theorems~2.2 and~2.3]{CL})} $\kk Q$ is (resp., right, left) Noetherian if and only if every cycle in $Q$ is (resp., not a source cycle, not a sink cycle) isolated.
			
			\smallskip
			
			\item[\textnormal{(iv)}]  \textnormal{(see, e.g., \cite[Theorem~2.1]{CL})}  $\kk Q$ is prime if and only if $Q$ is strongly connected.
			
			\smallskip
			
			\item[\textnormal{(v)}]  \cite[Proposition~2.1]{SilesMolina} $\kk Q$ is semiprime if and only if $Q$ is path reversible. 
			
			\smallskip
			
			\item[\textnormal{(vi)}] \textnormal{(see, e.g., \cite[Section~1.4]{Brion})} $\kk Q$ is hereditary,  and  $\operatorname{gldim}(\kk Q)= 0$ if and only if $Q$ is arrowless.  \qed
		\end{enumerate}
	\end{proposition}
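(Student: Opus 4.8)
The plan is to treat Proposition~\ref{prop:algprop} as an assembly of (largely standard) facts: the items in (i) are direct computations with the path basis, while (ii)--(vi) are the quiver-theoretic criteria available in the cited literature. So the proof amounts to recording short arguments for the elementary items and pointing to references for the rest.

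\smallskip

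\noindent\emph{Part (i).} I would start from the defining fact that $\kk Q$ has $\kk$-basis the set of all directed paths of $Q$, graded by length. By induction on $k$, the number of paths of length $k$ from $i$ to $j$ equals the $(i,j)$-entry $c_{i,j}^{(k)}$ of $C^k$: a length-$k$ path is a length-$(k{-}1)$ path from $i$ to some $\ell$ followed by an arrow from $\ell$ to $j$, which is exactly the matrix recursion $C^k = C^{k-1}C$. Hence, when $\kk Q$ is finite dimensional, $\dim_\kk \kk Q$ is the total number of paths, $\sum_{i,j\in Q_0,\, k\ge 0} c_{i,j}^{(k)}$, which is (b). For (c), view the degree-$k$ component of $\kk Q$ as a bimodule over $(\kk Q)_0 \cong \kk^{|Q_0|}$; its $(i,j)$-block has dimension $c_{i,j}^{(k)}$, so the matrix Hilbert series has $(i,j)$-entry $\sum_{k\ge 0} c_{i,j}^{(k)} t^k$, i.e. $H_{\kk Q}(t) = \sum_{k\ge 0} C^k t^k = (I-Ct)^{-1}$ as a formal matrix power series. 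For (a): if $Q$ is acyclic then every path has length at most $|Q_0|-1$ and there are finitely many of each length, so $\kk Q$ is finite dimensional; conversely any cycle $c$ yields the infinite $\kk$-linearly independent family $\{c, c^2, c^3, \dots\}$, so $\kk Q$ is infinite dimensional. This recovers \cite[Lemma~II.1.4]{ASS2006}.

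\smallskip

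\noindent\emph{Parts (ii)--(vi).} For (ii) I would invoke Ufnarovskii's growth theorem for monomial/path algebras \cite{Ufnar} (see also \cite[Theorem~3.12]{MorenoMolina}): the number of paths grows polynomially in the degree --- equivalently $\operatorname{GKdim}(\kk Q)<\infty$ --- exactly under the exclusive condition, and then $\operatorname{GKdim}(\kk Q)$ equals the maximal length of a chain of cycles $c_1 \Rightarrow \cdots \Rightarrow c_n$, the heuristic being that two cycles sharing a vertex manufacture exponentially many paths, while disjoint cycles threaded by a directed path contribute additively. Parts (iii)--(v) are the criteria recorded in \cite[Theorems~2.1--2.3]{CL} and \cite[Proposition~2.1]{SilesMolina}; the underlying mechanisms are that a source (resp.\ sink) cycle generates a non-finitely-generated right (resp.\ left) ideal, that a proper partition of $Q_0$ with every crossing arrow pointing one way produces two nonzero two-sided ideals with zero product (so primeness is equivalent to strong connectedness), and that a path admitting no ``return'' path spans a nilpotent ideal (so semiprimeness is equivalent to path reversibility). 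Finally, for (vi): $\kk Q$ is hereditary because the multiplication map from $\kk Q \otimes_{(\kk Q)_0} \kk(Q_1) \otimes_{(\kk Q)_0} \kk Q$ into $\kk Q \otimes_{(\kk Q)_0} \kk Q$ has image the augmentation kernel and is injective, yielding a length-$\le 1$ projective bimodule resolution, hence $\operatorname{gldim}(\kk Q)\le 1$; and $\operatorname{gldim}(\kk Q)=0$ forces semisimplicity, which happens precisely when $Q_1=\emptyset$ (see \cite[Section~1.4]{Brion}).

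\smallskip

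\noindent The only genuinely deep ingredient is (ii), Ufnarovskii's theorem, whose proof I would not reproduce; I expect that to be the main obstacle were one to insist on a self-contained treatment. Everything else is either an explicit path-counting/matrix computation or a one-paragraph ideal-theoretic argument, so in practice the ``proof'' is a curated list of references together with these short sketches.
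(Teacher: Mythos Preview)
Your proposal is correct, but note that the paper provides no proof at all: the proposition is stated as a compilation of known results with citations, and the \qed\ at the end of item (vi) signals that the authors regard it as proved by reference. Your write-up goes further by supplying short self-contained arguments for (i) and (vi) and heuristic explanations for (ii)--(v); this is more than the paper offers, and the content is accurate. The only comment is that your framing (``I expect the paper proves\ldots'') is off---there is nothing to compare against beyond the citations themselves.
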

	
	\section{Main results}
	
	In this section, we present our main result on  algebraic properties of face algebras $\hay$; see Section~\ref{sec:hay}. This is done by using the Kronecker square of $Q$, examined in Section~\ref{sec:Kronecker}.

	\subsection{On the Kronecker square of a quiver and its path algebra} \label{sec:Kronecker} 
	
	The construction of the quiver  below plays a key role in this work.
	
	\begin{definition}[$\widehat{Q}$]\label{ex:Qhat}
		Let $Q=(Q_0,Q_1,s,t)$ be a quiver. We define the {\it Kronecker square}  $\widehat{Q}$ of $Q$ as the quiver $\widehat{Q}=(\widehat{Q}_0, \widehat{Q}_1, \widehat{s}, \widehat{t})$ given by
		\[
		\begin{array}{lll}
			\medskip
			&\widehat{Q}_0= \{ [i,j] \}_{i,j\in Q_0},  \qquad & \widehat{Q}_1 = \{ [p,q]\}_{p,q\in Q_1}, \\
			& \widehat{s}([p,q])=[s(p),s(q)],  & \widehat{t}([p,q])=[t(p),t(q)],\quad \text{for all $p,q \in Q_1$}.
		\end{array}
		\]
	\end{definition} 
	
	Any path in $\widehat{Q}$ is of the form $[p_1 \cdots p_k, q_1 \cdots q_k]:=[p_1,q_1][p_2,q_2]\cdots[p_k,q_k]$ where $p_1\cdots p_k$, $q_1\cdots q_k \in Q_k$.
	By construction, $|\widehat{Q}_0|=|Q_0|^2$ and $|\widehat{Q}_1|=|Q_1|^2$. Moreover, $Q$ can be identified with the subquiver of $\widehat{Q}$ formed with the vertices $\{[i,i]\}_{i\in Q_0}$ and arrows $\{[p,p]\}_{p\in Q_1}$. Therefore, we have an embedding of path algebras $\kk Q \hookrightarrow \kk\widehat{Q}$.
	The reader may wish to refer to Table~\ref{tab:examples} below for examples of quivers $Q$, their Kronecker square $\widehat{Q}$, and their corresponding path algebras.
	
	\smallskip
	
	Next, we compare graph-theoretic properties of a quiver $Q$ with that of its Kronecker square $\widehat{Q}$, which will be used in the proof of Theorem~\ref{th:mainth} below. Recall Notation~\ref{not:adj}.
	
	\begin{proposition}\label{prop:quivprop}
		Let $Q=(Q_0,Q_1,s,t)$ be a quiver. Then, the following statements hold.
		\begin{enumerate}[label=\normalfont(\roman*)]
			\item \begin{enumerate}
				\item[\textnormal{(a)}] $Q$ is finite (resp., acyclic) if and only if $\widehat{Q}$ is finite (resp., acyclic). 
				
				\smallskip
				
				\item[\textnormal{(b)}] $|\widehat{Q}_k|=|Q_k|^2$, $k\geq 0$.
				
				\smallskip
				
				\item[\textnormal{(c)}] The adjacency matrix $\widehat{C}^k$ of  $\widehat{Q}_k$ is given by $\widehat{C}^k=C^k\otimes C^k$, for $k \geq 0$, where $\otimes$ is the tensor product of matrices.
			\end{enumerate}
			
			\smallskip
			
			\item
			$Q$ satisfies the exclusive condition if and only if $\widehat{Q}$ satisfies the exclusive condition.
			In this case, if the maximal length of chains of cycles in $Q$ is $n$, then the maximal length of chains of cycles in $\widehat{Q}$ is $2n-1$.

			\smallskip
			
			\item $Q$ has a source (resp., \c{sink}) cycle if and only if $\widehat{Q}$ has a source (resp., sink) cycle. 
			
			\smallskip
			
			\item 
			\begin{enumerate}
				\item[\textnormal{(a)}] $Q$ is pairwise strongly connected if and only if $\widehat{Q}$ is strongly connected.
				
				\smallskip
				
				\item[\textnormal{(b)}] If $Q$ is strongly connected and has at least one cycle, then $\widehat{Q}$ is strongly connected. Conversely, if $\widehat{Q}$ is strongly connected, then $Q$ is strongly connected.
			\end{enumerate}
			
			\smallskip
			
			\item $Q$ is path reversible if and only if $\widehat{Q}$ is path reversible. 
			
			\smallskip
			
			\item $Q$ is arrowless if and only if $\widehat{Q}$ is arrowless.
			
			\smallskip

		\end{enumerate}
	\end{proposition}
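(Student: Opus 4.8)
The plan is to derive the entire proposition from one bookkeeping fact about $\widehat Q$ together with two graph-theoretic reformulations that make the two sides directly comparable. The bookkeeping fact is: for each $k\ge 0$ the rule $[p_1,q_1][p_2,q_2]\cdots[p_k,q_k]\longmapsto (p_1p_2\cdots p_k,\ q_1q_2\cdots q_k)$ is a bijection from the length-$k$ paths of $\widehat Q$ onto the ordered pairs of length-$k$ paths of $Q$; under it a path of $\widehat Q$ from $[i,j]$ to $[i',j']$ corresponds to a pair consisting of a path $i\to i'$ and a path $j\to j'$ in $Q$ \emph{of the same length}, and a cycle of $\widehat Q$ corresponds to a pair $(c,d)$ of cycles of $Q$ with $\ell(c)=\ell(d)$ (and a prescribed alignment of basepoints). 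I will use three consequences: from the formulas for $\widehat s,\widehat t$ one gets $\operatorname{outdeg}_{\widehat Q}([i,j])=\operatorname{outdeg}_Q(i)\cdot\operatorname{outdeg}_Q(j)$ and dually for in-degrees; a vertex $[i,j]$ lies on a cycle of $\widehat Q$ if and only if both $i$ and $j$ lie on cycles of $Q$ (forward: project a coordinate; converse: closed-walk lengths at a vertex realize all sufficiently large multiples of the period of its strongly connected component, so $i$ and $j$ admit closed walks of a common length, whose interleaving is a cycle of $\widehat Q$ through $[i,j]$); and $\widehat C=C\otimes C$, hence $\widehat C^{\,k}=C^k\otimes C^k$. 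Granting this, parts (i) and (vi) are immediate: (i)(a) follows from $|\widehat Q_0|=|Q_0|^2$, $|\widehat Q_1|=|Q_1|^2$ and the cycle correspondence; (i)(b) is the $k$-th instance of the bijection; (i)(c) is the count $\widehat C^{\,k}_{[i,j],[i',j']}=c^{(k)}_{i,i'}c^{(k)}_{j,j'}=(C^k\otimes C^k)_{[i,j],[i',j']}$; and (vi) is $Q_1=\emptyset\iff\widehat Q_1=\emptyset$.

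For parts (iii) and (v) I would first record the reformulations. First: $Q$ has a source cycle if and only if some vertex lying on a cycle of $Q$ has out-degree $\ge 2$ (for ``$\Leftarrow$'': pass to a simple cycle through such a vertex $v$, which uses exactly one out-arrow at $v$, so a second out-arrow of $v$ witnesses it as a source cycle), and dually for sink cycles and in-degree. Combined with $\operatorname{outdeg}_{\widehat Q}([i,j])=\operatorname{outdeg}_Q(i)\operatorname{outdeg}_Q(j)$ and the ``lies on a cycle'' criterion above, $\widehat Q$ has a source cycle iff there are $i,j$ on cycles of $Q$ with $\operatorname{outdeg}_Q(i)\operatorname{outdeg}_Q(j)\ge 2$, which---each factor being at least $1$---is equivalent to some vertex on a cycle of $Q$ having out-degree $\ge 2$, i.e.\ to $Q$ having a source cycle; the sink case is dual. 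Second: $Q$ is path reversible if and only if every arrow of $Q$ lies on a cycle. The ``$\Leftarrow$'' half of (v) combines this with the observation that if an arrow $p$ lies on a cycle then every path $t(p)\to s(p)$ has length $\equiv -1$ modulo the period $d_p$ of the strongly connected component of $s(p)$, and all sufficiently large such lengths occur; hence for an arrow $[p,q]$ of $\widehat Q$ the achievable lengths of paths $t(p)\to s(p)$ and $t(q)\to s(q)$ are, eventually, all large integers $\equiv-1\pmod{d_p}$ (resp.\ $\pmod{d_q}$), so they meet, supplying a return path in $\widehat Q$ and placing $[p,q]$ on a cycle; ``$\Rightarrow$'' projects $[p,p]$-cycles to $Q$.

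For part (iv), the identity $\widehat C=C\otimes C$ reduces everything to classical Perron--Frobenius theory: $\widehat Q$ is strongly connected iff $C\otimes C$ is irreducible iff $C$ is primitive iff $Q$ is strongly connected and aperiodic, and this last condition is exactly ``$Q$ is pairwise strongly connected'', which gives (a). For (b), irreducibility of $C\otimes C$ forces irreducibility of $C$, so $\widehat Q$ strongly connected implies $Q$ strongly connected; and the ascent requires $Q$ to be aperiodic as well, which is where the cycle hypothesis enters.

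For part (ii) I would not argue combinatorially but route through Proposition~\ref{prop:algprop}(ii). By (i)(b), $\dim_{\kk}(\kk\widehat Q)_k=|\widehat Q_k|=|Q_k|^2=\bigl(\dim_{\kk}(\kk Q)_k\bigr)^2$ for all $k$. If $Q$ satisfies the exclusive condition, then by the growth estimates behind Proposition~\ref{prop:algprop}(ii) one has $\dim_{\kk}(\kk Q)_k=O(k^{n-1})$ with $n$ the maximal chain length of cycles in $Q$, while $\sum_{k\le m}\dim_{\kk}(\kk Q)_k$ grows like $m^{n}$; squaring gives $\dim_{\kk}(\kk\widehat Q)_k=O(k^{2n-2})$, and Cauchy--Schwarz gives $\sum_{k\le m}\dim_{\kk}(\kk\widehat Q)_k\ge m^{-1}\bigl(\sum_{k\le m}\dim_{\kk}(\kk Q)_k\bigr)^2$, which grows like $m^{2n-1}$, so $\operatorname{GKdim}\kk\widehat Q=2n-1<\infty$; and if $Q$ fails the exclusive condition, then $\dim_{\kk}(\kk Q)_k$, hence its square, grows super-polynomially, so $\operatorname{GKdim}\kk\widehat Q=\infty$. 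Feeding this back through Proposition~\ref{prop:algprop}(ii) applied to $\widehat Q$ yields simultaneously the exclusive-condition equivalence and the identity ``maximal chain length of $\widehat Q=2n-1$''. I expect this last point to be the main obstacle: pinning down the integer $2n-1$---whether via the precise growth comparison above or, in a direct combinatorial proof, by exhibiting a chain of $2n-1$ cycles in $\widehat Q$ and ruling out a longer one---is where one must control cycle lengths and periods rather than mere reachability, and a naive ``staircase'' chain $[c_1,c_1]\Rightarrow[c_2,c_1]\Rightarrow[c_2,c_2]\Rightarrow\cdots$ need not connect up without choosing the intermediate cycles of $\widehat Q$ with care.
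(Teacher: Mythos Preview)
Your argument diverges from the paper's in several places and is largely sound, with one genuine gap. Parts (i) and (vi) match the paper. For (iii) and (v) the paper proceeds by direct construction: it embeds $Q$ diagonally in $\widehat Q$ for one direction and projects for the other, and for the harder half of (v) it shows that in a path-reversible quiver any two equal-length paths admit equal-length reverses via the explicit padding $a'':=a'(aa')^{k+v-1}$, $b'':=b'(bb')^{k+u-1}$. Your reformulations (``source cycle $\Leftrightarrow$ some on-cycle vertex has out-degree $\ge 2$''; ``path reversible $\Leftrightarrow$ every arrow lies on a cycle'') together with period arguments are correct and more conceptual, at the cost of invoking the Frobenius-type fact that closed-walk lengths eventually fill out a residue class. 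For (ii) the paper gives a purely combinatorial proof: it builds the staircase chain $\widehat{c_1,c_1}\Rightarrow\widehat{c_1,c_2}\Rightarrow\cdots\Rightarrow\widehat{c_1,c_n}\Rightarrow\widehat{c_2,c_n}\Rightarrow\cdots\Rightarrow\widehat{c_n,c_n}$ of length $2n-1$, and bounds any chain in $\widehat Q$ by observing that its first and second coordinates each give chains in $Q$ of length at most $n$, forcing at most $n+(n-1)$ distinct pairs. Your detour through $\operatorname{GKdim}$---squaring the Hilbert function and then reading the chain length of $\widehat Q$ off Proposition~\ref{prop:algprop}(ii)---is a nice idea, but the upper bound genuinely needs the termwise estimate $|Q_k|=O(k^{n-1})$, which does not follow from $\operatorname{GKdim}\kk Q=n$ alone and must be extracted from the \emph{proof} of the cited result.

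The gap is in (iv). Under the paper's definition of pairwise strongly connected (which imposes $i\ne j$ and $i'\ne j'$), your claimed equivalence ``$Q$ pairwise strongly connected $\Leftrightarrow$ $C$ primitive'' fails: the quiver $1\rightleftarrows 2$ satisfies the pairwise condition (the only pairs to check have $\{i,j\}=\{i',j'\}=\{1,2\}$, all served by length-$1$ paths) yet has period $2$, so $C$ is imprimitive and $\widehat Q$ is disconnected---this is precisely the example in Figure~\ref{fig:strongly}. The same example shows that in (iv)(b) ``has at least one cycle'' does not yield aperiodicity, so your Perron--Frobenius ascent does not fire as stated. The paper argues (iv) instead by direct path construction; for (b) it routes both coordinate paths through a vertex on a fixed cycle $c$ and pads with powers of $c$ to equalize lengths. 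If you compute those lengths you will find the padding balances only when $|c|=1$, i.e.\ when $c$ is a loop; read that way, the paper's construction and your primitivity criterion agree.
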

	
	\begin{proof}
		(i.a) The statement about the finite condition follows from the definition of $\widehat{Q}$.
		
		If $\widehat{Q}$ is acyclic, then by identifying $Q$ as a subquiver of $\widehat{Q}$, we see that $Q$ is also acyclic. On the other hand, if  there is a cycle $[p_1p_2 \cdots p_k,q_1 q_2 \cdots q_k]\in \widehat{Q}_k$, then
		\begin{equation*}
			[s(p_1),s(q_1)]=\widehat{s}([p_1,q_1])=\widehat{t}([p_k,q_k])=[t(p_k),t(q_k)].
		\end{equation*}
		This implies that $Q$ contains the cycles $p_1\cdots p_k$ and $q_1\cdots q_k$.
		
		\smallskip 
		
		(i.b) Any path $[p_1 \cdots p_k, q_1 \cdots q_k]$ of length $k$ in $\widehat{Q}$ uniquely corresponds to the pair of paths $p_1 \cdots p_k$ and $q_1 \cdots q_k$ in $Q$. This means that the sets $\widehat{Q}_k$ and $Q_k\times Q_k$ are in one-to-one correspondence. Thus, $|\widehat{Q}_k|=|Q_k|^2$. 
		
		\smallskip	
		
		(i.c) Consider the adjacency matrix of paths of length $k$ in $\widehat{Q}$: $$\widehat{C}^k=\left(d_{[i,i'],[j,j']}^{(k)}\right)_{i,i',j,j'\in Q_0}.$$ On the other hand, each entry of $C^k\otimes C^k$ is of the form $c_{i,j}^{(k)}c_{i',j'}^{(k)}$, with $ i,i',j,j'\in Q_0$. Suppose that $c_{i,j}^{(k)}=n \geq 0$ and $c_{i',j'}^{(k)}=m \geq 0$, that is, there are $n$ paths of length $k$ from $i$ to $j$, and $m$ paths of length $k$ from $i'$ to $j'$. This determines $nm$ paths of length $k$ from $[i,i']$ to $[j,j']$ in $\widehat{Q}$, and no other of such paths can be formed. So, $d_{[i,i'],[j,j']}^{(k)}=nm$. Hence, $\widehat{C}^k=C^k\otimes C^k$.
		
		\smallskip

		(ii) Suppose that there exists a non-exclusive cycle $\widehat{c}=[p_1 \cdots p_k, q_1 \cdots q_k]$ in $\widehat{Q}$. Then, there is another cycle $\widehat{c'}=[p'_1 \cdots p'_l, q'_1 \cdots q'_l]$ in $\widehat{Q}$ such that $[s(p_i),s(q_i)]=[s(p'_j),s(q'_j)]$ for some $i=1,\dots,k$ and $j = 1,\dots,l$. This yields cycles $c_1=p_1\cdots p_k$ and $c_2=q_1 \cdots q_k$ in $Q$ that are non-exclusive: $c_1$ is not disjoint with $c'_1=p'_1 \cdots p'_l$ for $s(p_i)=s(p'_j)$, and $c_2$ is not disjoint with $c'_2=q'_1 \cdots q'_l$, for $s(q_i)=s(q'_j)$. Therefore, $Q$ also fails the exclusive condition.
		
		Conversely, suppose that $Q$ fails the exclusive condition. Then, using the fact that $Q$ can be identified with a sub-quiver of $\widehat{Q}$, we see that if two cycles in $Q$ were not disjoint, they would be also non-disjoint in $\widehat{Q}$. Thus, $\widehat{Q}$ fails the exclusive condition.
		
		\smallskip
		
		For the second part, denote the maximum length of a chain of cycles in $Q$ by $\operatorname{cyc.len}(Q)$. We aim to show that $\operatorname{cyc.len}(\widehat{Q}) = 2 \operatorname{cyc.len}(Q)- 1$. This is done with the notes below.
		
		\begin{itemize}
		    \item[1.] Given two distinct cycles $c=p_1p_2 \cdots c_k$, $d=q_1q_2 \cdots q_l$ in $Q$, construct the cycle $\widehat{c,d}$ in $\widehat{Q}$ given by 
		    \[	\widehat{s}([c^u,d^v])=[s(p_1),s(q_1)]=[t(p_k),t(q_l)]=\widehat{t}([c^u,d^v]),\]
		    for  $u:=\operatorname{lcm}(k,l)/k$ and $v:=\operatorname{lcm}(k,l)/l$. Indeed,
		\begin{gather*}
			\operatorname{length}(c^u) = u \operatorname{length}(c)=uk=vl=v \operatorname{length}(d)=\operatorname{length}(d^v).
		\end{gather*}
	
		     \item[2.] Note that all simple cycles in $\widehat{Q}$ are of the form $\widehat{c,d}$ for some (not necessarily distinct) cycles $c,d$ in $Q$.
		     
		     \smallskip
		     
		     \item[3.] Note that if $c_1,c_2,d_1,d_2$ are cycles in $Q$ such that $c_1 \Rightarrow d_1$ and $c_2 \Rightarrow d_2$, we have that $\widehat{c_1,d_1} \Rightarrow \widehat{c_2,d_2}$ in $\widehat{Q}$.
		     
		     \smallskip
		     
		     \item[4.] If $\widehat{c_1,d_1},\widehat{c_2,d_2}$ are simple cycles in $\widehat{Q}$ such that $\widehat{c_1,d_1} \Rightarrow \widehat{c_2,d_2}$, then $c_1 \Rightarrow c_2$ and $d_1 \Rightarrow d_2$ in $Q$.
		     
		     \smallskip
		     
	         \item[5.] Any chain of cycles of length $n$ in $Q$ induces a chain of cycles of length $2n-1$ in $\widehat{Q}$. Indeed, let $c_1 \Rightarrow c_2 \Rightarrow \cdots \Rightarrow c_n$ be a chain of cycles of length $n$ in $Q$. Then the chain of cycles
\begin{equation*}
\widehat{c_1,c_1} \Rightarrow \widehat{c_1,c_2} \Rightarrow \cdots \Rightarrow \widehat{c_1,c_n} \Rightarrow \widehat{c_2,c_n} \Rightarrow \widehat{c_3,c_n} \Rightarrow \cdots \Rightarrow \widehat{c_n,c_n}
\end{equation*}
in $\widehat{Q}$ has length $n+(n-1)=2n-1$. The arrows were constructed as in Note 3.
\end{itemize}
		
To prove the claim, observe that by the result already proven in this part, $\widehat{Q}$ satisfies the exclusive condition, and hence all cycles $\widehat{Q}$ are either simple (of the form $\widehat{c,d}$ by Notes 1 and 2), or powers of simple cycles. Therefore, in any chain of cycles in $\widehat{Q}$, we can assume that all cycles are simple. Let $n:=\operatorname{cyc.len}(Q)$, and suppose that there is a chain of cycles $\widehat{c_1,d_1} \Rightarrow \widehat{c_2,d_2} \Rightarrow  \cdots \Rightarrow  \widehat{c_m,d_m}$ in $\widehat{Q}$. Then by Note 4, we have $c_1 \Rightarrow c_2, \;c_2 \Rightarrow c_3,\; \ldots, c_{m-1}\Rightarrow c_m$ in $Q$. Observe that  we only have at most $n$ distinct choices for the $c_i$. A similar argument can be done for the $d_i$ to deduce that there are  at most $n$ distinct choices.
 So when constructing the cycles, $\widehat{c_i,d_i}$, at most  $n+(n-1)=2n-1$ distinct pairs are possible, that is, $m \leq 2n-1$. Hence, $\operatorname{cyc.len}(\widehat{Q})\leq 2  \operatorname{cyc.len}(Q) -1$. But in Note 5 we used a chain of cycles of length $n$ in $Q$ to construct a chain of cycles of length exactly $2n-1$ in $\widehat{Q}$. So, $\operatorname{cyc.len}(\widehat{Q}) = 2  \operatorname{cyc.len}(Q) -1$, as desired.

\smallskip
		
		(iii) Suppose that $Q$ has a source cycle $c:=p_1 p_2 \cdots p_k $ with arrow $p \in Q_1$, not in $c$, such that $s(p) = s(p_i)$ for some $i=1,\dots,k$. Then $\widehat{c}=[p_1\cdots p_k,p_1\cdots p_k]$ will be a source cycle in $\widehat{Q}$, where $[p,p]$ is an arrow leaving $\widehat{c}$ at $[p_i,p_i]$.
		
		Conversely, suppose that $\widehat{Q}$ has a source cycle, that is, a cycle $\widehat{c}=[p_1 \cdots p_k, q_1 \cdots q_k]$ with arrow $[p,q] \in \widehat{Q}_1$, not in $\widehat{c}$, such that $[s(p),s(q)]=[s(p_i),s(q_i)]$ for some $i=1,\dots,k$. This intermediately induces two source cycles in $Q$: $c_1=p_1 \cdots p_k$ with arrow $p$ leaving at $p_i$ and $c_2=q_1 \cdots q_k$ with arrow $q$ leaving at $q_i$.
		
		The argument for sink cycles is similar.
		
		\smallskip		
		
		(iv.a) Suppose that $Q$ is pairwise strongly connected. Then for every pair of vertices $[i,i'],[j,j']\in \widehat{Q}_0$ with $[i,i']\neq [j,j']$, there exist paths of the same length $p_1 \cdots p_k$ and $q_1 \cdots q_k$ such that $s(p_1)=i$, $t(p_k)=j$, $s(q_1)=i'$ and $t(q_k)=j'$. Thus, we have the path $[p_1 \cdots p_k,q_1 \cdots q_k] \in \widehat{Q}_k$ connecting $[i,i']$ and $[j,j']$. Hence $\widehat{Q}$ is strongly connected.
		
		Conversely, if $i,i',j,j' \in Q_0$ are such that  $[i,i']\neq [j,j']$ in $\widehat{Q}_0$, then  $\widehat{Q}$ being strongly connected implies that there exists a path $[p_1 \cdots p_k,q_1 \cdots q_k] \in \widehat{Q}_k $ with $\widehat{s}[p_1,q_1]=[i,i']$ and $\widehat{t}[p_k,q_k]=[j,j']$. Therefore, the paths $p_1 \cdots p_k,q_1 \cdots q_k \in Q_k$ are such that $s(p_1)=i$, $t(p_k)=j$, $s(q_1)=i'$, and $t(q_k)=j'$. Hence, $Q$ is pairwise strongly connected.
		
		\smallskip		
		
		(iv.b) Suppose that $Q$ is strongly connected and has a cycle $c$. Take $i,i',j,j' \in Q_0$ such that $i\neq j$ and $i'\neq j'$. If $x\in Q_0$ is a vertex of the cycle $c$ in $Q$, then take the following paths in $Q$: $p_1 \cdots p_n$ connecting $i$ with $x$, and $\overline{p}_1 \cdots \overline{p}_m$  connecting $x$ with $j$, $q_1 \cdots q_u$ connecting $i'$ with $x$, and $\overline{q}_1 \cdots \overline{q}_v$  connecting $x$ with $j'$. Then
		\[
		p_1 \cdots p_n c^{u+v} \overline{p}_1 \cdots \overline{p}_m, \qquad q_1 \cdots q_u c^{n+m} \overline{q}_1 \cdots \overline{q}_v \in Q_{n+m+u+v}
		\]
		\c{are paths (of the same length)} that connect $i$ with $j$, and $i'$ with $j'$, respectively. Then, $Q$ is pairwise strongly connected, and thus $\widehat{Q}$ is strongly connected by (iv.a).
		
		Conversely, suppose that $\widehat{Q}$ is strongly connected. If $i,j\in Q_0$ are such that $i\neq j$, then $[i,i]\neq[j,j]$ in $\widehat{Q}_0$. By the hypothesis, there exists a path $[p_1 \cdots p_k,q_1 \cdots q_k]$ connecting $[i,i]$ with $[j,j]$ in $\widehat{Q}$. Hence, $p_1 \cdots p_k$ and $q_1 \cdots q_k$ are both paths connecting $i$ with $j$ in $Q$. That is, $Q$ is strongly connected.
		
		\smallskip		
		
		(v) Suppose that $\widehat{Q}$ is path reversible, and take a path $p_1 \cdots p_k$ in $Q$. Consider the corresponding path $[p_1 \cdots p_k, p_1 \cdots p_k]$ in $\widehat{Q}$. Then there exists a path $[q_1\cdots q_l, q'_1 \cdots q'_l]$ in  $\widehat{Q}$ such that $[s(p_1),s(p_1)]=\widehat{s}[p_1,p_1]=\widehat{t}[q_l,q'_l]=[t(q_l),t(q'_l)]$ and $[s(q_1),s(q_1)]=\widehat{s}[q_1,q_1]=\widehat{t}[p_k,p_k]=[t(p_k),t(p_k)]$. That is, the paths $q_1 \cdots q_l, q'_1 \cdots q'_l \in Q_l $ are both reverse paths for $p_1 \cdots p_k$, and $Q$ is path reversible. 
		
		Towards the converse, we prove the following statement: if $Q$ is path reversible, then given any two paths of the same length, their respective reverse paths can be taken also of the same length. Indeed, if $a,b\in Q_k$ are two paths of $Q$, there are reverse paths $a'\in Q_u$ and $b'\in Q_v$ resp., with $u$ not necessarily is equal to $v$. If we take
		\[
		a'':=a'(aa')^{k+v-1} \quad \text{and} \quad b'':=b'(bb')^{k+u-1},
		\]
		notice that $s(a'')=s(a')=t(a)$, $t(a'')=t(a')=s(a)$, $s(b'')=s(b')=t(b)$ and $t(b'')=t(b')=s(b)$. Moreover,
		$$\operatorname{length}(a'')=u+(k+v-1)(k+u)=v+(k+u-1)(k+v) = \operatorname{length}(b'').$$
		Hence, $a'',b''$ are reverse paths of $a,b$, resp., of the same length.
		
		Now take $[p_1 \cdots p_k,q_1 \cdots q_k]\in \widehat{Q}_k$ and suppose that $Q$ is path reversible. Since $p_1 \cdots p_k$ and $q_1 \cdots q_k \in Q_k$ are paths of $Q$ of the same length, by the above, there exist reverse path of the same length $p'_1 \cdots p'_l,q'_1 \cdots q'_l \in Q_l$. That is, $[p'_1 \cdots p'_l,q'_1 \cdots q'_l]$ is the reverse path of $[p_1 \cdots p_k,q_1 \cdots q_k]$ in $\widehat{Q}$.
		
		\smallskip	
		
		(vi) This follows from the definition of $\widehat{Q}$.
	\end{proof}
	
	\begin{remark}	\label{rem:no-loop}
		Regarding Proposition~\ref{prop:quivprop}(iv.b),  being strongly connected does not necessarily imply that $\widehat{Q}$ is strongly connected, as shown in Figure~\ref{fig:strongly} below.
	\end{remark}
	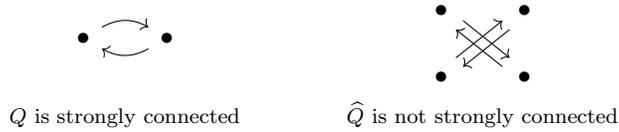
\begin{figure}[!ht]
		\begin{tabular}{ccccc}
			$\begin{tikzcd}[sep=scriptsize]
				\bullet \arrow[r,bend left] & \bullet \arrow[l,bend left]
			\end{tikzcd}$ &&&& $\begin{tikzcd}[sep=scriptsize]
				\bullet \arrow[dr,shift left=.5ex] & \bullet \arrow[dl,shift left=.5ex]\\
				\bullet \arrow[ur,shift left=.5ex] & \bullet \arrow[lu,shift left=.5ex]
			\end{tikzcd}$\\
			\footnotesize{$Q$ is strongly connected} &&&& \footnotesize{$\widehat{Q}$ is not strongly connected}
		\end{tabular}
		\caption{On the strongly connected condition} \label{fig:strongly}
	\end{figure}

	
	\subsection{On Hayashi's face algebra $\hay$} \label{sec:hay}
	We provide the main result on algebraic properties of Hayashi's face algebras $\hay$ [Definition~\ref{ex:hay}] in this section. Before this, we need the following preliminary result, which was also mentioned in \cite[Remark~3.3]{Pfeiffer}.
	
	\begin{proposition} \label{prop:main}
		Let $Q$ be a  quiver, and consider its Kronecker square $\widehat{Q}$ \textnormal{[Definition~\ref{ex:Qhat}]}. Then, $\hay \cong \kk \widehat{Q}$ as unital $\mathbb{N}$-graded $\kk$-algebras. 
	\end{proposition}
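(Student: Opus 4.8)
The plan is to write down the obvious bijection on labelling data and check it is an algebra map. Define $\varphi\colon \hay \to \kk\widehat{Q}$ on the distinguished generators of $\hay$ by $\varphi(x_{i,j}) = e_{[i,j]}$ for $i,j \in Q_0$, where $e_{[i,j]}$ is the trivial path at the vertex $[i,j]$ of $\widehat{Q}$, and $\varphi(x_{p,q}) = [p,q]$ for $p,q \in Q_1$, where $[p,q]$ is the corresponding length-one path of $\widehat{Q}$. To see that $\varphi$ extends to a well-defined $\kk$-algebra homomorphism, I would verify that the three families of relations in Definition~\ref{ex:hay} are satisfied by these images inside $\kk\widehat{Q}$. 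The first family becomes $e_{[i,j]} \ast e_{[i',j']} = \delta_{[i,j],[i',j']}\, e_{[i,j]} = \delta_{i,i'}\delta_{j,j'}\, e_{[i,j]}$, i.e.\ orthogonality of trivial paths. The second becomes $e_{[s(p),s(q)]} \ast [p,q] = [p,q] = [p,q] \ast e_{[t(p),t(q)]}$, which holds because $\widehat{s}([p,q]) = [s(p),s(q)]$ and $\widehat{t}([p,q]) = [t(p),t(q)]$ by Definition~\ref{ex:Qhat}. The third becomes $[p,q]\ast[p',q'] = \delta_{\widehat{t}([p,q]),\,\widehat{s}([p',q'])}\,[p,q][p',q'] = \delta_{t(p),s(p')}\delta_{t(q),s(q')}\,[pp',qq']$, using $\widehat{t}([p,q]) = [t(p),t(q)]$, $\widehat{s}([p',q']) = [s(p'),s(q')]$, and the identity $[pp',qq'] = [p,q][p',q']$ from the description of paths in $\widehat{Q}$; this matches $\varphi$ applied to the right-hand side of the relation.

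Next I would pin down $\varphi$ on all of $\hay$ and argue bijectivity. Iterating the third relation along a path of $Q$ (the consecutive arrows of $p_1\cdots p_k$ and of $q_1\cdots q_k$ are composable, so every Kronecker delta is $1$) gives the normal form $x_{p_1\cdots p_k,\,q_1\cdots q_k} = x_{p_1,q_1}\cdots x_{p_k,q_k}$ in $\hay$; hence $\varphi(x_{a,b}) = [a,b]$ for every pair of equal-length paths $a,b$ of $Q$ and every $k \geq 0$. Since every path of $\widehat{Q}$ is uniquely of the form $[a,b]$ with $a,b \in Q_k$ for some $k$ (the remark after Definition~\ref{ex:Qhat}, equivalently the bijection $\widehat{Q}_k \leftrightarrow Q_k \times Q_k$ of Proposition~\ref{prop:quivprop}(i.b)), the map $\varphi$ carries the prescribed $\kk$-basis $\{x_{a,b}\}$ of $\hay$ bijectively onto the path basis of $\kk\widehat{Q}$, so it is a linear isomorphism. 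It is unital because $\varphi(1_{\hay}) = \sum_{i,j \in Q_0} e_{[i,j]} = \sum_{[i,j] \in \widehat{Q}_0} e_{[i,j]} = 1_{\kk\widehat{Q}}$, and it is graded because it sends $(\hay)_k = \bigoplus_{a,b \in Q_k} \kk x_{a,b}$ onto $\bigoplus_{a,b \in Q_k} \kk[a,b] = \kk(\widehat{Q}_k) = (\kk\widehat{Q})_k$.

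The only genuine subtlety, and hence the main point to be careful about, is making sure the passage through the presentation of $\hay$ does not collapse distinct basis elements, i.e.\ that $\varphi$ is injective and not merely surjective. This is exactly what the explicit $\kk$-basis stated in Definition~\ref{ex:hay}, together with the normal form $x_{a,b} = x_{p_1,q_1}\cdots x_{p_k,q_k}$ and the bijection $\widehat{Q}_k \leftrightarrow Q_k \times Q_k$, takes care of; everything else is a routine relation-by-relation match against path-concatenation multiplication. As an alternative that sidesteps even this concern, one could instead construct the candidate inverse directly -- send each arrow $[p,q]$ of $\widehat{Q}$ to $x_{p,q}$ and each trivial path $e_{[i,j]}$ to $x_{i,j}$, check it respects the (composability) defining relations of the path algebra $\kk\widehat{Q}$, and then verify that the two composites act as the identity on the respective bases.
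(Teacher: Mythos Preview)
Your proof is correct and follows essentially the same route as the paper: both construct the map $\varphi(x_{a,b}) = [a,b]$, $\varphi(x_{i,j}) = e_{[i,j]}$ and verify it is a unital graded algebra isomorphism. The only cosmetic difference is that the paper defines $\varphi$ directly as a $\kk$-linear map on the given basis $\{x_{a,b}\}$ and checks multiplicativity in one line, whereas you define $\varphi$ on the degree-$0$ and degree-$1$ generators, verify the three defining relations of $\hay$, and then recover the same formula on the full basis; both arguments are standard and your added discussion of injectivity is more explicit than the paper's ``clearly bijective.''
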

	
	\begin{proof}
		First, notice that the product of paths in $\kk \widehat{Q}$ is defined in terms of the concatenation of paths in $Q$, that is, $[a,b][c,d]=\delta_{t(a),s(c)}\delta_{t(b),s(d)}[ac,bd]$ for any paths $a,b$, and paths $c,d$, of the same length. The unit of $\kk \widehat{Q}$ is given by $1_{\kk \widehat{Q}} = \sum_{i,j\in Q_0} e_{[i,j]},$
		where $e_{[i,j]}$ denotes the trivial path at vertex $[i,j]\in \widehat{Q}_0$.
		Now consider the map $\varphi : \hay \rightarrow \kk \widehat{Q}$ given by 
		$$
		\varphi(x_{a,b})=[a,b], \quad a,b\in Q_k,\; k\geq 1 \qquad \text{and} \qquad
		\varphi(x_{i,j})=e_{[i,j]}, \quad i,j\in Q_0.
		$$
		Since $\hay = \bigoplus_{k \geq 0; a,b \in Q_k} \kk x_{a,b}$ as $\kk$-vector spaces,  $\varphi$ is a $\kk$-linear map. Moreover,
		\begin{gather*}
			\varphi(x_{a,b}x_{c,d})=\delta_{t(a),s(c)}\delta_{t(b),s(d)}\varphi(x_{ac,bd})=\delta_{t(a),s(c)}\delta_{t(b),s(d)}[ac,bd]=\varphi(x_{a,b})\varphi(x_{c,d}),\\
			\varphi(1_{\hay})=\varphi\left(\textstyle \sum_{i,j\in Q_0} x_{i,j} \right)= \textstyle \sum_{i,j\in Q_0} \varphi(x_{i,j})=\textstyle \sum_{i, j \in Q_0} e_{[i,j]}=1_{\kk \widehat{Q}}.
		\end{gather*}
		Therefore, $\varphi$ is a unital $\kk$-algebra map, which is clearly bijective and graded. 
	\end{proof}
	
	This brings us to the main result on algebraic properties of the face algebra $\hay$.
	
	\begin{theorem}
		\label{th:mainth}
		Let $Q$ be a  quiver, and recall Notation~\ref{not:adj}. Then, the following hold. 
		\begin{enumerate}[label=\normalfont(\roman*)]
			\item $\kk Q$ is finite dimensional if and only if $\hay$ is finite dimensional. In this case, 
			\begin{equation*}
				\textstyle	\dim_{\kk} (\hay) = \sum_{i,j\in Q_0, \; k\geq 0}\; (c_{i,j}^{(k)})^2.
			\end{equation*}
			\smallskip
			In general, the Hilbert series of $\hay$ is given by
			\begin{equation*}
				H_{\hay}(t)=(I \otimes I)+(C\otimes C)t+(C^2\otimes C^2)t^2+\cdots,
			\end{equation*}
			where $I$ is the $|Q_0| \times |Q_0|$ identity matrix, and $\otimes$ is the tensor product of matrices.
			\smallskip
			\item $\kk Q$ has finite GK-dimension if and only if $\hay$ has finite GK-dimension. In this case,
		    $\operatorname{GKdim}(\hay)= 2\operatorname{GKdim}(\kk Q)-1.$
			\smallskip
			\item $\kk Q$ is (left, right) Noetherian if and only if $\hay$ is (left, right) Noetherian.
			\smallskip
			\item If $\kk Q$ is prime and $Q$ has at least one cycle, then $\hay$ is prime. Conversely, if $\hay$ is prime, then $\kk Q$ is prime.
			\smallskip
			\item $\kk Q$ is semiprime if and only if $\hay$ is semiprime.
			\smallskip
			\item $\operatorname{gldim}(\kk Q)=\operatorname{gldim}(\hay)$; in particular, $\hay$ is hereditary.
			\smallskip
			\item $\kk Q$ and $\hay$ are Koszul.
		\end{enumerate}
	\end{theorem}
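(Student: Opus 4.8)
The plan is to derive every statement in Theorem~\ref{th:mainth} from the graded algebra isomorphism $\hay\cong\kk\widehat{Q}$ of Proposition~\ref{prop:main}, together with the two preliminary propositions: Proposition~\ref{prop:algprop}, which records how an algebraic property of a path algebra $\kk Q'$ is governed by a graph property of $Q'$, and Proposition~\ref{prop:quivprop}, which records how that graph property behaves under the Kronecker square. For each property $\mathcal P$ in the list, the argument is a four-link chain
\begin{gather*}
\hay\text{ has }\mathcal P \;\Longleftrightarrow\; \kk\widehat{Q}\text{ has }\mathcal P \;\Longleftrightarrow\; \widehat{Q}\text{ has }\mathcal P'\\
\Longleftrightarrow\; Q\text{ has }\mathcal P' \;\Longleftrightarrow\; \kk Q\text{ has }\mathcal P,
\end{gather*}
where the first link is Proposition~\ref{prop:main}, the second and fourth are the relevant item of Proposition~\ref{prop:algprop} applied to $\widehat{Q}$ and to $Q$ respectively, and the third is the corresponding item of Proposition~\ref{prop:quivprop}. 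The quantitative clauses are handled in the same spirit: one transports the known formulas for $\kk\widehat{Q}$ through Proposition~\ref{prop:main} and rewrites $\widehat{C}^{\,k}=C^k\otimes C^k$ using Proposition~\ref{prop:quivprop}(i.c). So the proof of Theorem~\ref{th:mainth} is largely an exercise in matching up the items of those two propositions.

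Item by item: for (i), ``finite-dimensional'' corresponds to ``acyclic'' by Proposition~\ref{prop:algprop}(i.a), and acyclicity is preserved by the Kronecker square by Proposition~\ref{prop:quivprop}(i.a); then $H_{\hay}(t)=H_{\kk\widehat{Q}}(t)=\sum_{k\ge 0}\widehat{C}^{\,k}t^k=\sum_{k\ge 0}(C^k\otimes C^k)t^k$ by Propositions~\ref{prop:main},~\ref{prop:algprop}(i.c), and~\ref{prop:quivprop}(i.c) in turn, and the dimension formula in the finite case drops out by summing the entries of this series at $t=1$, equivalently by counting the basis $\{x_{a,b}\}$ of $\hay$, which is in bijection with $\bigsqcup_{k\ge 0}Q_k\times Q_k$. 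For (ii), ``finite GK-dimension'' corresponds to the exclusive condition by Proposition~\ref{prop:algprop}(ii), which is preserved by the Kronecker square by Proposition~\ref{prop:quivprop}(ii), and the value $\operatorname{GKdim}(\hay)=2\operatorname{GKdim}(\kk Q)-1$ is exactly the ``$n\mapsto 2n-1$'' clause for maximal chains of cycles in Proposition~\ref{prop:quivprop}(ii), read off through the second sentence of Proposition~\ref{prop:algprop}(ii). Part (iii) combines Proposition~\ref{prop:algprop}(iii) with Proposition~\ref{prop:quivprop}(iii): ``$Q$ has no source (resp.\ sink) cycle'' is equivalent to the same for $\widehat{Q}$, so right (resp.\ left) Noetherianity transfers in both directions, and the two-sided case is their conjunction. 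Part (iv) combines Proposition~\ref{prop:algprop}(iv) with the one-directional implications of Proposition~\ref{prop:quivprop}(iv.b): $\kk Q$ prime (i.e., $Q$ strongly connected) together with $Q$ having a cycle forces $\widehat{Q}$ strongly connected, hence $\hay$ prime; conversely $\hay$ prime forces $\widehat{Q}$, and then $Q$, strongly connected, hence $\kk Q$ prime. Part (v) is Proposition~\ref{prop:algprop}(v) plus the equivalence of path-reversibility for $Q$ and $\widehat{Q}$ from Proposition~\ref{prop:quivprop}(v). Part (vi): both $\kk Q$ and $\hay\cong\kk\widehat{Q}$ are hereditary by Proposition~\ref{prop:algprop}(vi), so each has global dimension in $\{0,1\}$, and the two are $0$ simultaneously since $\widehat{Q}$ is arrowless iff $Q$ is (Proposition~\ref{prop:quivprop}(vi)); hence they coincide. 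Part (vii) follows at once from Proposition~\ref{prop:main}: a path algebra is the tensor algebra of the bimodule $\kk Q'_1$ over its semisimple degree-zero part $\kk Q'_0$, and such tensor algebras are Koszul, so both $\kk Q$ and $\hay$ are Koszul.

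I expect the genuine difficulty to live entirely upstream, in Proposition~\ref{prop:quivprop}, and in particular in its items (ii) and (iv.b): proving that the Kronecker square sends the maximal chain-of-cycles length $n$ to exactly $2n-1$ (which requires matching upper and lower bounds phrased in terms of chains of the ``diagonal-type'' simple cycles $\widehat{c,d}$ of $\widehat{Q}$), and identifying precisely when strong connectedness lifts to $\widehat{Q}$. It is this last asymmetry — strong connectedness of $Q$ alone need not survive, cf.\ Remark~\ref{rem:no-loop} — that forces the extra ``$Q$ has a cycle'' hypothesis and the one-way phrasing of Theorem~\ref{th:mainth}(iv). Within the proof of Theorem~\ref{th:mainth} proper, the steps deserving attention rather than a bare citation are the bookkeeping for the dimension formula in (i) and the degenerate case $\operatorname{GKdim}(\kk Q)=0$ in (ii), which is absorbed into part (i); all remaining items are mechanical compositions of Proposition~\ref{prop:main} with the matching items of Propositions~\ref{prop:algprop} and~\ref{prop:quivprop}.
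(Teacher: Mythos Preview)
Your proposal is correct and matches the paper's own proof essentially line for line: the paper also reduces everything to $\kk\widehat{Q}$ via Proposition~\ref{prop:main}, then invokes the matching items (i)--(vi) of Propositions~\ref{prop:algprop} and~\ref{prop:quivprop} for parts (i)--(vi), and proves (vii) by the same tensor-algebra argument. Your write-up is in fact more explicit than the paper's (which is a two-sentence proof), and your flagging of the degenerate case $\operatorname{GKdim}(\kk Q)=0$ in part~(ii) is a nice touch that the paper does not spell out.
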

	
	\begin{proof}
		By Proposition~\ref{prop:main}, it suffices to establish the statements above with  $\kk \widehat{Q}$ in place of  $\hay$. Now the statements (i)-(vi) follow from parts (i)-(vi) of Propositions~\ref{prop:algprop} and~\ref{prop:quivprop}, respectively. Part (vii) holds as the path algebra $\kk Q$ (resp., $\kk \widehat{Q} \cong \hay)$ is realized as a tensor algebra $T_{\kk Q_0}(\kk Q_1)$ (resp., $T_{\kk \widehat{Q}_0}(\kk \widehat{Q}_1)$, and tensor algebras are Koszul. 
	\end{proof}
	
	\begin{remark}
		For $Q$ acyclic, $\kk Q$ being prime may not imply $\hay$ is prime [Remark~\ref{rem:no-loop}].
	\end{remark}
	

	\section{Examples} \label{sec:example}
	
	In Table~\ref{tab:examples} below, we present some quivers $Q$ and $\widehat{Q}$ and their corresponding path algebras illustrating the results obtained in the previous sections. 
	Moreover, in Table~\ref{tab:dimADE} below, we present a summary of the results for dimension of path algebras of quivers having as underlying graph a Dynkin diagram; note that orientation of the quiver is relevant. 
	
	\clearpage
	
	\textcolor{white}{.}

\vspace{.3in}

	{\footnotesize	
		\begin{table}[!ht]
			\begin{tabular}{|c|c|c|c|}
				\hline 
				$Q$ & $\kk Q$ & $\widehat{Q}$ & $\kk \widehat{Q} \cong \hay$\\\hline\hline
				$\begin{matrix}
					\bullet_1 & \bullet_2 & \cdots & \bullet_n
				\end{matrix}$ & $\kk^n$ & $\begin{matrix}
					\bullet_{[1,1]} & \cdots & \bullet_{[1,n]}\\
					\vdots & \ddots & \vdots\\
					\bullet_{[n,1]} & \cdots & \bullet_{[n,n]}
				\end{matrix}$ & $\kk^{n^2}$\\\hline
				$\begin{tikzcd}[sep=small]
					\bullet \arrow[out=0,in=30,loop,swap,"p_1"]
					\arrow[out=90,in=120,loop,swap,"p_2"]
					\arrow[out=180,in=210,loop,swap,"\cdots"]
					\arrow[out=270,in=300,loop,swap,"p_n"]
				\end{tikzcd}$ & $\kk \langle t_1,\ldots,t_n \rangle $ & $\begin{tikzcd}[sep=small]
					\bullet \arrow[out=0,in=36,loop,swap,"{[p_1,p_1]}"]
					\arrow[out=72,in=108,loop,swap,"\cdots"]
					\arrow[out=144,in=180,loop,swap,"{[p_i,p_j]}"]
					\arrow[out=216,in=256,loop,swap,"\cdots"]
					\arrow[out=288,in=324,loop,swap,"{[p_n,p_n]}"]
				\end{tikzcd}$ & $\kk \langle t_{i,j} \rangle_{i,j=1}^n$\\\hline
				$\begin{tikzcd}[sep=scriptsize]
					\bullet \arrow[r] & \bullet
				\end{tikzcd}$ & $T_2(\kk) = \begin{bmatrix}
					\kk & \kk\\
					0 & \kk
				\end{bmatrix}$ & $\begin{tikzcd}[sep=scriptsize]
					\bullet \arrow[rd] & \bullet\\
					\bullet & \bullet
				\end{tikzcd}$ & $T_2(\kk) \times \kk^2$\\\hline
				$\begin{tikzcd}[sep=scriptsize]
					\bullet \arrow[r] & \bullet \arrow[r] & \bullet
				\end{tikzcd}$ & $T_3(\kk) = \begin{bmatrix}
					\kk & \kk & \kk\\
					0 & \kk & \kk\\
					0 & 0 & \kk
				\end{bmatrix}$ & $\begin{tikzcd}[sep=scriptsize]
					\bullet\arrow[rd] & \bullet \arrow[rd] & \bullet\\
					\bullet\arrow[rd] & \bullet\arrow[rd] & \bullet\\
					\bullet & \bullet & \bullet
				\end{tikzcd}$ & $T_3(\kk) \times T_2(\kk)^2 \times \kk^2$\\\hline
				$\begin{tikzcd}[sep=scriptsize]
					\bullet \arrow[r] & \bullet & \bullet \arrow[l]
				\end{tikzcd}$ & $\begin{bmatrix}
					\kk & 0 & 0\\
					\kk & \kk & 0\\
					\kk & 0 & \kk
				\end{bmatrix}$ & $\begin{tikzcd}[sep=scriptsize]
					\bullet\arrow[rd] & \bullet  & \arrow[ld]\bullet\\
					\bullet & \bullet & \bullet \\
					\bullet \arrow[ru] & \bullet & \bullet\arrow[ul]
				\end{tikzcd}$ & $\begin{bmatrix}
					\kk & 0 & 0 & 0 &0\\
					\kk & \kk & 0 &0 &0\\
					\kk & 0 & \kk & 0 & 0\\
					\kk & 0 & 0 & \kk & 0\\
					\kk & 0 & 0 & 0 & \kk
				\end{bmatrix} \times \kk^4$\\\hline
					$\begin{tikzcd}[sep=small]
					\bullet \arrow[r] & \bullet & \bullet \arrow[r] & \bullet
				\end{tikzcd}$ & $T_2(\kk)^2$ & $\begin{tikzcd}[sep=small]
					\bullet \arrow[rd] & \bullet & \bullet  \arrow[rd] & \bullet\\
					\bullet & \bullet & \bullet & \bullet\\
					\bullet  \arrow[rd] & \bullet & \bullet  \arrow[rd] & \bullet\\
					\bullet & \bullet & \bullet & \bullet
				\end{tikzcd}$ & $T_2(\kk)^4 \times \kk^8$\\\hline
				$\begin{tikzcd}[sep=scriptsize]
					\bullet \arrow[r,shift left=1ex]\arrow[r,shift right=1ex] & \bullet
				\end{tikzcd}$ & $ \begin{bmatrix}
					\kk & \kk^2\\
					0 & \kk
				\end{bmatrix}$ & $\begin{tikzcd}[sep=scriptsize]
					\bullet\arrow[rd,shift left=1.5ex] \arrow[rd,shift left=0.5ex] \arrow[rd,shift right=1.5ex] \arrow[rd,shift right=0.5ex] & \bullet \\
					\bullet & \bullet
				\end{tikzcd}$ & $\begin{bmatrix}
					\kk & \kk^4\\
					0 & \kk
				\end{bmatrix} \times \kk^2$\\\hline
				$\begin{tikzcd}[sep=scriptsize]
					\bullet \arrow[r] \arrow[out=60,in=120,loop] & \bullet
				\end{tikzcd}$ & $ \begin{bmatrix}
					\kk[t] & \kk[t]\\
					0 & \kk
				\end{bmatrix}$ & $\begin{tikzcd}[sep=scriptsize]
					\bullet\arrow[out=60,in=120,loop] \arrow[r] \arrow[rd] \arrow[d] & \bullet \\
					\bullet & \bullet
				\end{tikzcd}$ & $\begin{bmatrix}
					\kk [t] & \kk[t] & \kk[t] & \kk[t]\\
					0 & \kk & 0 & 0\\
					0 & 0 & \kk & 0\\
					0 & 0 & 0 & \kk
				\end{bmatrix}$\\\hline
			\end{tabular}
			\vspace{.2in}
			
			\caption{Examples of $Q$, $\widehat{Q}$, and their path algebras}\label{tab:examples}
		\end{table}
	}
	
	\clearpage
\textcolor{white}{.}

\vspace{.1in}
	
	{\footnotesize
		\begin{table}[!ht]
			\begin{tabular}{|c|c|c|}
				\hline 
				$Q$ & $\dim_{\kk} \kk Q$ & $\dim_{\kk} \hay $\\\hline\hline
				$\begin{tikzcd}[column sep=scriptsize, row sep= small]
					\bullet_1 \arrow[r] & \bullet_2 \arrow[r] & \cdots \arrow[r] & \bullet_n
				\end{tikzcd} \quad n\geq 1$ & 
				$\frac{n(n+1)}{2}$ & 
				$\frac{n(n+1)(2n+1)}{6}$ \\\hline
				$\begin{gathered}\begin{tikzcd}[column sep=scriptsize, row sep= small]
						\bullet_1 \arrow[r] & \bullet_2 & \arrow[l] \cdots & \arrow[l] \bullet \arrow[r] & \bullet_{2n}
					\end{tikzcd}\\
					\text{or}\\\begin{tikzcd}[column sep=scriptsize, row sep= small]
						\bullet_1 \arrow[r] & \bullet_2 & \arrow[l] \cdots \arrow[r] & \bullet & \arrow[l] \bullet_{2n-1}
				\end{tikzcd}\end{gathered} \quad n\geq 2$ & $2n-1$ & $2n^2-2n+1$ \\\hline
					$\begin{tikzcd}[column sep=scriptsize, row sep= small]
						\bullet_1 \arrow[rd] &\\
						& \bullet_3 \arrow[r] & \cdots \arrow[r] & \bullet_n\\
						\bullet_2 \arrow[ru] &
					\end{tikzcd}
				\quad n\geq 4$ & $\frac{n(n+1)}{2}-1 $ & $\frac{n(n+1)(2n+1)}{6}-1$  \\\hline
				$\begin{tikzcd}[column sep=scriptsize, row sep= small]
					\bullet_1  &\\
					& \bullet_3 \arrow[r] \arrow[lu] \arrow[ld] & \cdots \arrow[r] & \bullet_n\\
					\bullet_2  &
				\end{tikzcd} \quad n\geq 4$ & $\frac{n^2-3n+10}{2}$ & $\frac{2n^3-9n^2+61n-78}{6}$  \\\hline
			$\begin{tikzcd}[sep=scriptsize]
				&&\bullet&&\\
				\bullet \arrow[r]  & \bullet \arrow[r] & \bullet \arrow[u] \arrow[r] &  \bullet \arrow[r] & \bullet
			\end{tikzcd}$ & $19 $ & $87$  \\\hline
		$\begin{tikzcd}[sep=scriptsize]
			&&\bullet&&&\\
			\bullet \arrow[r]  & \bullet \arrow[r] & \bullet \arrow[u] \arrow[r] &  \bullet \arrow[r] & \bullet \arrow[r] & \bullet
		\end{tikzcd}$ & $25 $ & $131$  \\\hline
	$\begin{tikzcd}[sep=scriptsize]
		&&\bullet&&&&\\
		\bullet \arrow[r]  & \bullet \arrow[r] & \bullet \arrow[u] \arrow[r] &  \bullet \arrow[r] & \bullet \arrow[r] & \bullet \arrow[r] & \bullet
	\end{tikzcd}$ & $32 $ & $188$  \\\hline
			\end{tabular}
				\vspace{.2in}
			
			\caption{$\dim_\kk \kk Q$ and $\dim_\kk \hay$ for some $Q$ of Dynkin type ADE}  \label{tab:dimADE}
		\end{table}
	}
	

	\bibliographystyle{alpha}
	\bibliography{biblio}
	
\end{document}